\documentclass[11pt,a4paper]{article}

\usepackage{epsf,epsfig,amsfonts,amsgen,amsmath,amstext,amsbsy,amsopn,amsthm,cases,listings,color
}
\usepackage{ebezier,eepic}
\usepackage{color}
\usepackage{multirow}
\usepackage{epstopdf}
\usepackage{graphicx}
\usepackage{pgf,tikz}
\usepackage{mathrsfs}
\usepackage[marginal]{footmisc}
\usepackage{enumitem}
\usepackage[titletoc]{appendix}
\usepackage{booktabs}
\usepackage{url}
\usepackage{amssymb}
\usepackage{subfigure}
\usepackage{authblk}

\usepackage{pgfplots}


\usepackage{subfigure}
\usepackage{mathrsfs}
\usepackage{xcolor}
\usetikzlibrary{arrows}


\allowdisplaybreaks[1]

\definecolor{uuuuuu}{rgb}{0.27,0.27,0.27}
\definecolor{sqsqsq}{rgb}{0.1255,0.1255,0.1255}

\setlength{\textwidth}{150mm} \setlength{\oddsidemargin}{7mm}
\setlength{\evensidemargin}{7mm} \setlength{\topmargin}{-5mm}
\setlength{\textheight}{245mm} \topmargin -18mm

\newtheorem{definition}{Definition} [section]
\newtheorem{theorem}[definition]{Theorem}
\newtheorem{lemma}[definition]{Lemma}
\newtheorem{proposition}[definition]{Proposition}

\newtheorem{problem}[definition]{Problem}
\newtheorem{observation}[definition]{Observation}


\newcommand{\blow}[2]{#1(\!(#2)\!)}
\newcommand{\multiset}[1]{\{\hspace{-0.25em}\{\hspace{0.1em}#1\hspace{0.1em}\}\hspace{-0.25em}\}}

\newcommand{\hide}[1]{}
\def\multisets#1#2{\ensuremath{\left(\kern-.3em\left(\genfrac{}{}{0pt}{}{#1}{#2}\right)\kern-.3em\right)}}

\setlength{\parindent}{0pt}
\parskip=8pt

\begin{document}

\title{\bf\Large Hypergraph Tur\'{a}n densities can have arbitrarily large algebraic degree}

\date{\today}

\author{Xizhi Liu\thanks{Research was supported by ERC Advanced Grant 101020255. Email: xizhi.liu@warwick.ac.uk} }
\author{Oleg Pikhurko\thanks{Research was supported by ERC Advanced Grant 101020255 and
            Leverhulme Research Project Grant RPG-2018-424. Email: o.pikhurko@warwick.ac.uk} }
\affil{Mathematics Institute and DIMAP,\\
            University of Warwick,\\
            Coventry, CV4 7AL, UK}
\maketitle
\begin{abstract}
Grosu [
Journal of Combinatorial Theory (B), \textbf{118} (2016) 137--185]  asked if there exist an integer $r\ge 3$ and a finite family of $r$-graphs whose Tur\'{a}n density, as a real number, has (algebraic) degree greater than~$r-1$. In this note we show that, for all integers $r\ge 3$ and $d$,
there exists a finite family of $r$-graphs whose Tur\'{a}n density has degree at least~$d$, thus answering Grosu's question
in a strong form.
\end{abstract}
\section{Introduction}\label{SEC:Introduction}
For an integer $r\ge 2$, an \emph{$r$-uniform hypergraph} (henceforth, an \emph{$r$-graph}) $H$ is a collection of $r$-subsets of some finite set $V$.
Given a family $\mathcal{F}$ of $r$-graphs, we say $H$ is \emph{$\mathcal{F}$-free}
if it does not contain any member of $\mathcal{F}$ as a subgraph.
The {\em Tur\'{a}n number} $\mathrm{ex}(n,\mathcal{F})$ of $\mathcal{F}$ is the maximum
number of edges in an $\mathcal{F}$-free $r$-graph on $n$ vertices.
The {\em Tur\'{a}n density} $\pi(\mathcal{F} )$ of $\mathcal{F}$ is defined as
$\pi(\mathcal{F}):=\lim_{n\to \infty}\mathrm{ex}(n,\mathcal{F})/{n\choose r}$; the
existence of the limit was established in~\cite{KatonaNemetzSimonovits64}.
The study of $\mathrm{ex}(n,\mathcal{F})$ is one of the central topic in extremal graph and hypergraph theory. For the
hypergraph Tur\'an problem (i.e.\ the case $r\ge 3$), we refer the  reader to the surveys by Keevash~\cite{Keevash11} and Sidorenko~\cite{Sidorenko95}.

For $r\ge 3$, determining the value of $\pi(\mathcal{F})$ for a given $r$-graph family $\mathcal F$ is very difficult in general, and there are only a few known results. For example,  the problem of determining $\pi(K_{\ell}^{r})$ raised by Tur\'{a}n~\cite{TU41} in 1941,
where $K_{\ell}^{r}$ is the complete $r$-graph on $\ell$ vertices, is wide open and the $\$ 500$ prize of
Erd\H{o}s for solving it for at least one pair $\ell>r\ge 3$ is still unclaimed.

\hide{
Indeed, the problem of determining $\pi(K_{\ell}^{r})$ raised by Tur\'{a}n~\cite{TU41},
where $K_{\ell}^{r}$ is the complete $r$-graph on $\ell$ vertices, is still wide open for all $\ell>r\ge 3$.
Erd\H{o}s offered $\$ 500$ for the determination of any $\pi(K_{\ell}^{r})$
with $\ell > r \ge 3$ and $\$ 1000$ for the determination of all $\pi(K_{\ell}^{r})$ with $\ell > r \ge 3$.
The smallest open case $K_{4}^3$ is of particular interest,
for which Tur\'{a}n~\cite{TU41} conjectured that $\pi(K_{4}^{3})  = 5/9$.
Successively better upper bounds for $\pi(K_{4}^{3})$ were obtained by
de Caen $\cite{DC88}$, Giraud (see $\cite{CL99}$), Chung and Lu  $\cite{CL99}$, and Razborov $\cite{RA10}$.
The current record is $\pi(K_{4}^{3}) \le 0.561666$, which was obtained by Razborov  $\cite{RA10}$ using the Flag Algebra machinery.
As for families of $r$-graphs in general, there are only finitely many elements are known to belong to $\Pi_{\mathrm{fin}}^{(r)}$ before 2006 even for $r=3$
(see e.g. \cite{Furedi91,DF00,MR02,FPS03,FS05,KS05a,KS05b,FPS05}).
The first infinite sequence of elements that belong to $\Pi_{\mathrm{fin}}^{(r)}$ was found by Mubayi in~\cite{MU06}, where he proved that
\begin{align*}
\left\{\frac{(\ell)_{r}}{\ell^r} \colon \ell \in \mathbb{N} \text{ and } \ell\ge r \right\}
\subset \Pi_{\mathrm{fin}}^{(r)} \quad\text{for every } r \ge 3.
\end{align*}
Later, more elements were proved to belong to $\Pi_{\mathrm{fin}}^{(r)}$,
and we refer the reader to \cite{KE11,BIJ17,NY17,NY18,JPW18,BNY19,YP22} for more recent research.
}

For every integer $r\ge 2$, define
\begin{align*}
\Pi_{\mathrm{fin}}^{(r)} & := \left\{\pi(\mathcal{F}) \colon \text{$\mathcal{F}$ is a finite family of $r$-graphs} \right\}, \quad\text{and} \\
\Pi_{\infty}^{(r)} & := \left\{\pi(\mathcal{F}) \colon \text{$\mathcal{F}$ is a (possibly infinite) family of $r$-graphs} \right\}.
\end{align*}

For $r = 2$ the celebrated Erd\H{o}s--Stone--Simonovits theorem~\cite{ErdosSimonovits66,ErdosStone46} determines the Tur\'{a}n density for every family $\mathcal{F}$ of graphs; in particular, it holds that
\begin{align*}
\Pi_{\infty}^{(2)} = \Pi_{\mathrm{fin}}^{(r)}=  \{1\}\cup \left\{1-{1}/{k} \colon \mbox{integer $k\ge 1$}\right\}.
\end{align*}

The problem of understanding the sets $\Pi_{\mathrm{fin}}^{(r)}$ and $\Pi_{\infty}^{(r)}$ of possible $r$-graph Tur\'an densities for $r\ge 3$
has attracted a lot of attention. One of the earliest results here is the theorem of Erd\H{o}s~\cite{E64} from the 1960s that $\Pi_{\infty}^{(r)} \cap (0, r!/r^r) = \emptyset$ for every integer $r\ge 3$. However, our understanding of the locations and the lengths of other maximal intervals avoiding $r$-graph Tur\'an densities and the right accumulation points of $\Pi_{\infty}^{(r)}$ (the so-called \emph{jump problem}) is very limited; for some results in this direction see e.g.~\cite{BaberTalbot11,FranklPengRodlTalbot07,FranklRodl84,Pikhurko15,YanPeng21}.

It is known that the set $\Pi_{\infty}^{(r)}$ is the topological closure of $\Pi_{\mathrm{fin}}^{(r)}$ (and thus the former set is easier to understand) and that $\Pi_{\infty}^{(r)}$ has cardinality of continuum (and thus is strictly larger than the countable set $\Pi_{\mathrm{fin}}^{(r)}$), see respectively Proposition~1 and Theorem~2 in~\cite{PI14}.

For a while it was open whether $\Pi_{\mathrm{fin}}^{(r)}$ can contain an irrational number (see the conjecture of Chung and Graham in~\cite[Page 95]{CG98}), until such examples were independently found by  Baber and Talbot~\cite{BT12} and by the second author~\cite{PI14}. However, the question of Jacob Fox (\cite[Question~27]{PI14}) whether $\Pi_{\mathrm{fin}}^{(r)}$ can contain a transcendental number remains open.

Grosu~\cite{Grosu16} initiated a systematic study of algebraic properties of the sets $\Pi_{\mathrm{fin}}^{(r)}$ and $\Pi_{\infty}^{(r)}$. He proved a number of general results that, in particular, directly give further examples of irrational Tur\'an densities.

Recall that the \emph{(algebraic) degree} of a real number $\alpha$ is the minimum degree of a non-zero polynomial $p$ with integer coefficients that vanishes on~$\alpha$; it is defined to be $\infty$ if no such $p$ exists (that is, if the real $\alpha$ is transcendental). In the same paper, Grosu~\cite[Problem~3]{Grosu16} posed the following question.

\begin{problem}[Grosu]\label{PROB:Grosu}
Does there exist an integer $r\ge 3$ such that $\Pi_{\mathrm{fin}}^{(r)}$ contains an algebraic number $\alpha$ of degree strictly larger than~$r-1$?
\end{problem}

Apparently,  all $r$-graph Tur\'an densities that Grosu knew or could produce with his machinery had degree at most~$r-1$, explaining this expression in his question.
His motivation for asking this question was that if, on input $\mathcal F$, we can compute
an upper bound on the degree of $\pi(\mathcal F)$ as well as on the absolute values of the coefficients of its minimal polynomial, then we can compute $\pi(\mathcal F)$ exactly, see the discussion in~\cite[Page 140]{Grosu16}.

\hide{
Using a computer assisted proof, Baber and Talbot~\cite{BT12} proved that $\Pi_{\mathrm{fin}}^{(3)}$ contains an irrational number,
which disproved a conjecture of Chung and Graham~\cite{CG98}.
In~\cite{PI14}, the second author proved, among many other results, that for every $r\ge 3$ the set $\Pi_{\mathrm{fin}}^{(r)}$ contains an irrational number and
$\Pi_{\infty}^{(r)}$ is an uncountable set which is the closure of $\Pi_{\mathrm{fin}}^{(r)}$, and in particular, $\Pi_{\infty}^{(r)}$ contains transcendental numbers.
Motivated by Fox's question (see \cite{PI14}) on whether there exists a transcendental number $\tau$ such that $\tau \in \Pi_{\mathrm{fin}}^{(r)}$ for some $r\ge 3$,
Grosu~\cite{Grosu16} considered the algebraic and topological structure of $\Pi_{\mathrm{fin}}^{(r)}$ and $\Pi_{\infty}^{(r)}$,
and proved several general results about them.
As a corollary of his main result, Grosu showed that 
\begin{align*}
\alpha_{r}:= 1-\frac{ r^{r-1}-(r-1)! }{ \left(r+ \sqrt[r-1]{r^{r-1}-(r-1)!}\right)^{r-1} } \in \Pi_{\mathrm{fin}}^{(r)} \quad\text{for every } r\ge 4.
\end{align*}
}

%
%

In this short note we answer Grosu's question in the following stronger form.

\begin{theorem}\label{THM:min-poly-degree}
For every integer $r\ge 3$ and for every integer $d$ there exists an algebraic number in $\Pi_{\mathrm{fin}}^{(r)}$
whose minimal polynomial has degree at least $d$.
\end{theorem}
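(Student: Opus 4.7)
The plan is to construct, for each integer $d$, a finite family $\mathcal{F}_d$ of $r$-graphs whose Tur\'an density is algebraic of degree at least $d$, building on the blow-up construction methodology from~\cite{PI14} that produced the first irrational Tur\'an densities. Concretely, I would design an auxiliary $r$-graph $H_d$ on $O(d)$ vertices together with a finite family $\mathcal{F}_d$ of forbidden configurations, chosen so that every $\mathcal{F}_d$-free $r$-graph is asymptotically a weighted blow-up of $H_d$. This stability-type claim would be established via a supersaturation argument together with a standard counting/averaging step, using the finiteness of $\mathcal{F}_d$ to guarantee density only in the limit.

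Granted this, the Tur\'an density satisfies $\pi(\mathcal{F}_d) = r!\cdot\lambda(H_d)$, where $\lambda(H_d)$ is the graph Lagrangian, i.e.\ the maximum of $\sum_{e\in H_d}\prod_{v\in e}x_v$ over the probability simplex on $V(H_d)$. Since the Lagrangian is a polynomial optimum on a compact set, the optimal weights and $\lambda(H_d)$ satisfy a system of polynomial (Lagrange-multiplier) equations; eliminating the weight variables via resultants yields a univariate polynomial $P_d$ having $\lambda(H_d)$ as a root, and hence $\lambda(H_d)$ is algebraic over $\mathbb{Q}$.

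The main obstacle is ensuring that the algebraic degree of $\lambda(H_d)$ is at least $d$. I would arrange $H_d$ so that the polynomial $P_d$ produced above has an irreducible factor of degree $\ge d$; standard tools here include Eisenstein's criterion, Newton polygons, and reduction modulo a prime. A particularly convenient setup is to relate $\lambda(H_d)$ to the Perron--Frobenius eigenvalue of an irreducible non-negative integer matrix $M_d$ whose characteristic polynomial can be shown to be irreducible (e.g.\ by an Eisenstein-type argument after a carefully chosen substitution). Balancing the two competing demands---that $H_d$ be combinatorially tractable enough for the stability analysis of the first step while simultaneously producing a provably irreducible polynomial of high degree---is the key creative input; for $r>3$ one may then promote the $3$-graph example by a standard lifting construction, so the essential case is $r=3$.
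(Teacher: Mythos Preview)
Your outline correctly identifies the overall architecture---reduce to $r=3$, realise the Tur\'an density as the Lagrangian of some combinatorial template via the machinery of~\cite{PI14}, and then certify large algebraic degree via an Eisenstein-type argument---but what you have is not a proof: the construction itself, which you yourself flag as ``the key creative input'', is absent, and your concrete suggestions for filling it diverge from what actually works. You restrict to ordinary $r$-graphs $H_d$ and the standard hypergraph Lagrangian, whereas the paper uses \emph{patterns} (edges are $r$-multisets) and invokes the ready-made theorem from~\cite{PI14} that the Lagrangian of any \emph{minimal} pattern equals $\pi(\mathcal F)$ for some finite~$\mathcal F$; no ad hoc stability or supersaturation is needed, and the multiset edges are precisely what makes the recursion below tractable.

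The decisive idea you are missing is that the paper does not engineer a single object $H_d$ with a controllably irreducible Lagrange-multiplier polynomial. It builds a \emph{recursive} sequence of $3$-uniform patterns $P_1,P_2,\ldots$, where $P_{k+1}$ is obtained by nesting a copy of $P_k$ inside one part of the base pattern~$P_1$. This yields the closed recurrence $\lambda(P_{k+1})=1/\sqrt{3-2\lambda(P_k)}$, and hence a recursion $p_{k+1}(x)=(2x^2)^{2^k}p_k\!\bigl((3x^2-1)/(2x^2)\bigr)$ on annihilating polynomials, chosen so that Eisenstein's criterion at the prime~$3$ \emph{propagates} through the recursion and forces each $p_k$ to be irreducible of degree exactly~$2^k$. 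Your Perron--Frobenius suggestion is a red herring here: the Lagrangian of a $3$-uniform object is the maximum of a cubic form on the simplex, not the spectral radius of any evident integer matrix, and there is no clear mechanism for reading off its algebraic degree from a characteristic polynomial.
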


Our proof for Theorem~\ref{THM:min-poly-degree} is constructive; in particular,
for $r=3$ we will show that the following infinite sequence is contained in $\Pi_{\mathrm{fin}}^{(3)}$:
\begin{align}\label{eq:Seq}
\frac{1}{\sqrt{3}},\quad
\frac{1}{\sqrt{3-\frac{2}{\sqrt{3}}}},\quad
\frac{1}{\sqrt{3-\frac{2}{\sqrt{3-\frac{2}{\sqrt{3}}}}}},\quad
\frac{1}{\sqrt{3-\frac{2}{\sqrt{3-\frac{2}{\sqrt{3-\frac{2}{\sqrt{3}}}}}}}},\quad
\ldots\quad .
\end{align}


\section{Preliminaries}\label{SEC:Prelim}
In this section, we introduce some preliminary definitions and results that will be used later.

For an integer $r\ge 2$,
an ($r$-uniform) \emph{pattern} is a pair $P=(m,\mathcal{E})$, where $m$ is a positive integer,
$\mathcal{E}$ is a collection of $r$-multisets
on $[m]:=\{1,\ldots,m\}$, where by an \emph{$r$-multiset} we mean an unordered collection of $r$ elements with repetitions allowed.
Let $V_1,\dots,V_m$
be disjoint sets and let $V=V_1\cup\dots\cup V_m$.
The \emph{profile} of an $r$-set $R\subseteq V$ (with respect to $V_1,\dots,V_m$) is
the $r$-multiset on $[m]$ that contains element $i$ with multiplicity $|R \cap V_i|$ for every $i\in [m]$.
For an $r$-multiset $S \subseteq [m]$, let
$\blow{S}{V_1,\dots,V_m}$ consist of all $r$-subsets of $V$ whose profile is $S$.
We call this $r$-graph the \emph{blowup} of $S$ and
the $r$-graph
$$
\blow{\mathcal{E}}{V_1,\dots,V_m} := \bigcup_{S\in \mathcal{E}} \blow{S}{V_1,\dots,V_m}
$$
is called the \emph{blowup} of $\mathcal{E}$ (with respect to $V_1,\dots,V_m$).
We say that an $r$-graph $H$ is a \emph{$P$-construction} if it is a blowup of $\mathcal{E}$. Note that these are special cases of the more general definitions from~\cite{PI14}.

It is easy to see that the notion of a pattern is a generalization of a hypergraph,
since every $r$-graph is a pattern in which $\mathcal{E}$ is a collection of (ordinary) $r$-sets.
For most families $\mathcal{F}$ whose Tur\'an problem was resolved, the extremal $\mathcal{F}$-free constructions are blowups of some simple pattern.
For example, let $P_{B} := (2, \left\{ \multiset{1,2,2}, \multiset{1,1,2} \right\})$, where we use $\multiset{\empty}$ to distinguish multisets from ordinary sets.
Then a $P_{B}$-construction is a $3$-graph $H$ whose vertex set can be partitioned into two parts $V_1$ and $V_2$
such that $H$ consists of all triples that have nonempty intersections with both $V_1$ and $V_2$.
A famous result in the hypergraph Tur\'{a}n theory is that the pattern $P_{B}$
characterizes the structure of all maximum 3-graphs of sufficiently large order that do not contain a Fano plane (see \cite{DF00,FS05,KS05b}).

For a pattern $P = (m, \mathcal{E})$, let the \emph{Lagrange polynomial} of $\mathcal{E}$ be
\begin{align*}
\lambda_{\mathcal{E}}(x_1,\dots,x_m)
:= r!\,\sum_{E\in \mathcal{E}}\; \prod_{i=1}^m\; \frac{x_i^{E(i)}}{E(i)!},
\end{align*}
where $E(i)$ is the multiplicity of $i$ in the $r$-multiset $E$.
In other words, $\lambda_{\mathcal{E}}$ gives
the asymptotic edge density of a large blowup of $\mathcal{E}$, given its relative part sizes~$x_i$.

The \emph{Lagrangian} of $P$ is defined as follows:
\begin{align*}
\lambda(P) := \sup\left\{\lambda_{\mathcal{E}}(x_1,\dots,x_m) \colon (x_1,\dots,x_m)\in \Delta_{m-1} \right\},
\end{align*}
where $\Delta_{m-1}:=\{(x_1,\dots,x_m)\in [0,1]^m\colon x_1+\ldots+x_m=1\}$ is the standard $(m-1)$-dimensional simplex in $\mathbb{R}^m$. Since we maximise a polynomial (a continuous function) on a compact space, the supremum is in fact the maximum and we call the vectors in $\Delta_{m-1}$ attaining it \emph{$P$-optimal}.
Note that the Lagrangian of a pattern is a generalization
of the well-known \emph{hypergraph Lagrangian} that
has been successfully applied to Tur\'an-type problems (see e.g.~\cite{BaberTalbot11,FranklRodl84,YP22}), with
the basic idea going back to Motzkin and Straus~\cite{MS65}.

For $i\in [m]$ let $P-i$ be the pattern obtained
from $P$ by \emph{removing index $i$}, that is, we remove $i$ from $[m]$ and delete
all multisets containing $i$ from $E$ (and relabel the remaining indices to form the set $[m-1]$).
We call $P$ \emph{minimal} if $\lambda(P-i)$ is strictly smaller than $\lambda(P)$ for every $i\in [m]$, or equivalently if no $P$-optimal vector has a zero entry.
For example, the 2-graph pattern $P:= (3,\{\,\multiset{1,2},\multiset{1,3}\,\})$ is not minimal as $\lambda(P)=\lambda(P-3)=1/2$.

In~\cite{PI14}, the second author studied the relations between possible hyrergraph Tur\'{a}n densities and patterns. One of the main results from~\cite{PI14} is as follows.

\begin{theorem}[\cite{PI14}]\label{THM:min-pattern}
For every minimal pattern $P$ there exists a finite family $\mathcal{F}$ of $r$-graphs
such that $\pi(\mathcal{F}) = \lambda(P)$,
and moreover, every maximum $\mathcal{F}$-free $r$-graph is a $P$-construction.
\end{theorem}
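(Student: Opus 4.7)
The plan is to define the forbidden family $\mathcal{F}$ as a collection of small non-$P$-colorable configurations and establish the density and structural claims via averaging together with a stability-type argument. Call a map $\chi: V(G)\to [m]$ \emph{$P$-admissible} for an $r$-graph $G$ if the profile of every edge of $G$ under $\chi$ lies in $\mathcal{E}$, and say that $G$ is \emph{$P$-colorable} if such a $\chi$ exists; equivalently, $G$ embeds as a subgraph into some $P$-construction. For an integer $t$ chosen sufficiently large (depending on $P$), take $\mathcal{F} := \mathcal{F}_t$ to be the finite family of all $r$-graphs on at most $t$ vertices that are \emph{not} $P$-colorable.

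The inequality $\pi(\mathcal{F}_t) \ge \lambda(P)$ is straightforward: minimality of $P$ yields a $P$-optimal vector $(x_1^*,\ldots,x_m^*)$ with all coordinates strictly positive, and the $P$-construction on $n$ vertices with $|V_i| \approx x_i^* n$ is $\mathcal{F}_t$-free (each of its subgraphs inherits a $P$-admissible coloring) and has edge density tending to $\lambda_\mathcal{E}(x^*) = \lambda(P)$.

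For the matching upper bound and the extremal characterization, I would proceed in two stages. First, a double-counting argument over $t$-vertex subsets gives $\pi(\mathcal{F}_t) \le \ell_t$, where $\ell_t$ denotes the maximum edge density of a $P$-colorable $r$-graph on $t$ vertices; since $\ell_t \to \lambda(P)$ as $t \to \infty$, the Tur\'an density is pinned down asymptotically. Second, to obtain the exact equality $\pi(\mathcal{F}_t) = \lambda(P)$ and to characterize the extremizers, I would deploy a hypergraph regularity plus stability analysis: any $\mathcal{F}_t$-free $r$-graph $H$ of near-optimal density admits an approximate partition $V(H) = V_1 \sqcup \cdots \sqcup V_m$ such that almost every edge has profile in $\mathcal{E}$, and minimality of $P$ (no $P$-optimal vector has a zero entry) forces the part ratios $|V_i|/n$ to lie near a $P$-optimal vector. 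A Zykov-type vertex-exchange symmetrization then shows that in an extremal $H$ this partition is in fact exact and $H$ coincides with the corresponding $P$-construction.

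The main obstacle is bridging the approximate stability conclusion to the exact one. The averaging bound is suboptimal by $\ell_t - \lambda(P) = O(1/t)$, and erasing this gap requires a delicate perturbation argument: showing that any excess or deficit of edges relative to the ideal $P$-construction can be locally corrected by $P$-admissible edges without creating a non-$P$-colorable subgraph of size $\le t$. The minimality assumption on $P$ is indispensable here, since it prevents the partition produced by the stability step from collapsing (with some part emptying) under the exchange moves, and it is precisely what gives the extremal $P$-construction the rigidity needed for the final contradiction with extremality when $H$ deviates from it.
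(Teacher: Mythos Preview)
The present paper does not contain a proof of this theorem at all: it is quoted from~\cite{PI14} and used as a black box, so there is no ``paper's own proof'' to compare against. Your sketch is therefore not competing with anything in this article.

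That said, your outline has a real gap that you yourself flag but do not close. The averaging over $t$-subsets gives only $\lambda(P)\le \pi(\mathcal{F}_t)\le \ell_t$ with $\ell_t-\lambda(P)=\Theta(1/t)$, and nothing you wrote forces $\pi(\mathcal{F}_t)=\lambda(P)$ for any \emph{fixed} finite~$t$. Invoking ``hypergraph regularity plus stability'' and then ``Zykov-type symmetrization'' is a plausible slogan, but the difficult content is precisely the step you wave at: one must show that an $\mathcal{F}_t$-free $r$-graph that is \emph{close} to a $P$-construction but has strictly more edges than the optimal blowup would contain a non-$P$-colorable subgraph of size at most~$t$. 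Your ``local correction'' sentence asserts this without argument, and it is not clear why the excess edges (which by definition have bad profiles) cannot coexist with $P$-colorability of all $t$-subsets. In~\cite{PI14} this is handled by a careful argument exploiting minimality (every optimal weight vector has full support) together with a concrete analysis of how a single bad edge interacts with the surrounding blowup structure; the stability and the exact result are proved in tandem rather than as separate stages. So your high-level architecture matches the source, but the decisive technical step is missing rather than merely compressed.
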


Let $r \ge 3$ and $s \ge 1$ be two integers.
Given an $r$-uniform pattern $P = (m,\mathcal{E})$, one can create an $(r+s)$-uniform pattern $P+s := (m+s, \hat{\mathcal{E}})$ in the following way:
for every $E\in \mathcal{E}$ we insert the $s$-set $\{m+1,\ldots,m+s\}$ into $E$, and let $\hat{\mathcal{E}}$ denote the resulting family of $(r+s)$-multisets.
For example, if $P = (3, \left\{ \multiset{1,2,3}, \multiset{1,3,3}, \multiset{2,3,3} \right\})$, then
$P+1 = (4, \left\{ \multiset{1,2,3,4}, \multiset{1,3,3,4}, \multiset{2,3,3,4} \right\})$.

The following observation follows easily from the definitions.

\begin{observation}\label{OBS:plus-s-minimal}
If $P$ is a minimal pattern, then $P+s$ is a minimal pattern for every integer $s\ge 1$.
\end{observation}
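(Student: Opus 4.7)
The plan is to reduce the Lagrangian of $P+s$ to a product of the Lagrangian of $P$ and a pure function of the new variables, and then show that any $(P+s)$-optimal vector is forced to have all positive entries.

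First I would compute the Lagrange polynomial of $\hat{\mathcal{E}}$ directly from the definition. Each $E\in\mathcal{E}$ produces the multiset $\hat{E}$ obtained by adjoining the simple $s$-set $\{m+1,\dots,m+s\}$, so $\hat{E}(j)=1$ for $j\in\{m+1,\dots,m+s\}$ and $\hat{E}(i)=E(i)$ for $i\in [m]$. Substituting this into the definition yields the factorization
\begin{equation*}
\lambda_{\hat{\mathcal{E}}}(x_1,\dots,x_{m+s})
=\frac{(r+s)!}{r!}\,x_{m+1}\cdots x_{m+s}\,\lambda_{\mathcal{E}}(x_1,\dots,x_m).
\end{equation*}

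Next I would maximize this over $\Delta_{m+s-1}$ by introducing the auxiliary parameter $t=x_{m+1}+\cdots+x_{m+s}$. The AM-GM inequality gives $x_{m+1}\cdots x_{m+s}\le (t/s)^s$ with equality exactly when all of $x_{m+1},\dots,x_{m+s}$ equal $t/s$. Since $\lambda_{\mathcal{E}}$ is homogeneous of degree $r$, the homogeneity trick shows that $\max\lambda_{\mathcal{E}}(x_1,\dots,x_m)$ subject to $x_i\ge0$ and $\sum_{i=1}^m x_i=1-t$ equals $(1-t)^r\lambda(P)$, attained exactly when $(x_1/(1-t),\dots,x_m/(1-t))$ is $P$-optimal. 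So on the simplex,
\begin{equation*}
\lambda_{\hat{\mathcal{E}}}(x_1,\dots,x_{m+s})\;\le\;\frac{(r+s)!}{r!}\,\lambda(P)\,\left(\frac{t}{s}\right)^{\!s}(1-t)^r,
\end{equation*}
and a single-variable calculation shows that $t\mapsto (t/s)^s(1-t)^r$ on $[0,1]$ is uniquely maximized at $t^\star=s/(r+s)$, which lies in the open interval $(0,1)$.

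Finally I would extract minimality. Because $P$ is minimal, the multiset $\mathcal{E}$ is nonempty and hence $\lambda(P)>0$; in particular the maxima on the boundary $t\in\{0,1\}$ are strictly dominated by the value at $t^\star$. Consequently every $(P+s)$-optimal vector satisfies $t=t^\star$ and $x_{m+j}=1/(r+s)>0$ for $j\in[s]$, while $(x_1,\dots,x_m)$ is a positive multiple of some $P$-optimal vector. The minimality of $P$ then forces $x_i>0$ for $i\in[m]$ as well, so every $(P+s)$-optimal vector has all entries strictly positive, which is the desired minimality of $P+s$. The only mild subtlety is the simultaneous handling of the AM-GM equality case and the homogeneity rescaling, but both give equality only at unique interior configurations, so no obstacle arises.
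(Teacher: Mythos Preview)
Your argument is correct. The paper does not give an explicit proof of this observation (it states only that it ``follows easily from the definitions''), but your factorization of $\lambda_{\hat{\mathcal{E}}}$ together with the AM--GM and homogeneity optimization is precisely the computation the paper carries out in the proof of Proposition~\ref{PROP:Lagrangina-plus-s}, so your approach is the intended one with the minimality conclusion appended.
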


For the Lagrangian of $P+s$ we have the following result.

\begin{proposition}\label{PROP:Lagrangina-plus-s}
Suppose that $r\ge 2$ is an integer and $P$ is an $r$-uniform pattern.
Then for every integer $s\ge 1$ we have
$$
\lambda(P+s) =\frac{r^r (s+r)!}{(r+s)^{r+s} r!}\, \lambda(P).
$$
In particular,  the real numbers $\lambda(P+s)$ and $\lambda(P)$ have the same degree.
\end{proposition}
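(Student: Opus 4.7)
The plan is to compute $\lambda(P+s)$ directly from the definition of the Lagrange polynomial. Writing $P = (m,\mathcal{E})$ and $P+s = (m+s,\hat{\mathcal{E}})$, every $\hat{E}\in\hat{\mathcal{E}}$ arises from a unique $E\in\mathcal{E}$ by appending the indices $m+1,\ldots,m+s$ each with multiplicity~$1$. Thus, by the product structure of the monomials in $\lambda_{\hat{\mathcal{E}}}$, one obtains the factorisation
\begin{align*}
\lambda_{\hat{\mathcal{E}}}(x_1,\ldots,x_{m+s})
= \frac{(r+s)!}{r!}\,\lambda_{\mathcal{E}}(x_1,\ldots,x_m)\,\prod_{j=1}^{s} x_{m+j},
\end{align*}
since the new indices each contribute $x_{m+j}^{1}/1!$ and we must adjust the leading factorial from $r!$ to $(r+s)!$.

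Next I would maximise this expression over $\Delta_{m+s-1}$ by splitting the variables. Set $t := x_1+\cdots+x_m$, so that $x_{m+1}+\cdots+x_{m+s} = 1-t$. Because $\lambda_{\mathcal{E}}$ is homogeneous of degree $r$, the maximum of $\lambda_{\mathcal{E}}(x_1,\ldots,x_m)$ subject to $x_1+\cdots+x_m = t$ and $x_i\ge 0$ equals $t^r\lambda(P)$, attained by rescaling a $P$-optimal vector by~$t$. By the AM--GM inequality, the maximum of $\prod_{j=1}^s x_{m+j}$ subject to $\sum_{j} x_{m+j}=1-t$ and $x_{m+j}\ge 0$ equals $((1-t)/s)^s$, attained when all $x_{m+j}$ are equal. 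Combining these two independent optimisations reduces the problem to
\begin{align*}
\lambda(P+s) = \frac{(r+s)!}{r!\,s^s}\,\lambda(P)\cdot\max_{t\in[0,1]} t^r(1-t)^s.
\end{align*}

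A routine calculus exercise (differentiate, or use weighted AM--GM on $(rt/r)$ and $(s(1-t)/s)$) shows that $t^r(1-t)^s$ is maximised at $t = r/(r+s)$ with value $r^r s^s/(r+s)^{r+s}$. Substituting gives exactly the claimed formula
\begin{align*}
\lambda(P+s) = \frac{r^r(r+s)!}{(r+s)^{r+s}\,r!}\,\lambda(P).
\end{align*}
The prefactor is a positive rational number, so $\lambda(P+s)$ lies in $\mathbb{Q}(\lambda(P))$, and conversely $\lambda(P)\in\mathbb{Q}(\lambda(P+s))$, which immediately implies that the two reals have the same algebraic degree over~$\mathbb{Q}$. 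I do not expect any genuine obstacle here: the only conceptual point is the clean factorisation of $\lambda_{\hat{\mathcal{E}}}$, after which the optimisation separates into two standard problems.
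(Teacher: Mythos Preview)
Your proof is correct and follows essentially the same line as the paper's: factorise $\lambda_{\hat{\mathcal{E}}}$ via the product structure, use homogeneity of $\lambda_{\mathcal{E}}$ together with AM--GM on the new coordinates to reduce to a one-variable optimisation, and check that the bound is attained. The only cosmetic difference is the parametrisation---you set $t=\sum_{i\le m}x_i$ whereas the paper sets $x=\frac1s\sum_{j>m}x_j$ (so $t=1-sx$)---but the arguments are otherwise identical.
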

\begin{proof}
Assume that $P = (m, \mathcal{E})$.
Let $\hat{P} := P+s = (m+s, \widehat{\mathcal{E}})$.
Let $(x_1, \ldots, x_{m+s}) \in \Delta_{m+s-1}$ be a $\hat{P}$-optimal vector.
Note from the definition of Lagrange polynomial that
$$\lambda(\hat{P})
= \lambda_{\hat{\mathcal{E}}}(x_1, \ldots, x_{m+s})
= \frac{(r+s)!}{r!}\,\lambda_{\mathcal{E}}(x_1, \ldots, x_{m})  \prod_{i= m+1}^{m+s}x_i.
$$
Let $x := \frac 1s\, \sum_{i= m+1}^{m+s}x_i$ and note that $\sum_{i= 1}^{m}x_i = 1-sx$.
Since $\lambda_{{\mathcal{E}}}$ is a homogenous polynomial of degree $r$, we have
\begin{align*}
\lambda_{{\mathcal{E}}}(x_1, \ldots, x_{m})
= \lambda_{\mathcal{E}}\left(\frac{x_1}{1-sx}, \ldots, \frac{x_m}{1-sx} \right) (1-sx)^r
\le \lambda(P) (1-sx)^r.
\end{align*}
This and the AM-GM inequality give that
\begin{align*}
\lambda(\hat{P})
= \frac{(r+s)!}{r!}\, \lambda_{\mathcal{E}}(x_1, \ldots, x_{m})  \prod_{i= m+1}^{m+s}x_i
\le \frac{(r+s)!}{r!}\, \lambda(P) (1-sx)^r x^s.
\end{align*}
For $x\in [0,1/s]$,  the function $(1-sx)^r (rx)^s$, as the product of $s+r$ non-negative terms summing to $r$, is maximized when all terms are equal, that is, at $x = \frac{1}{r+s}$.
So
\begin{align*}
\lambda(\hat{P})
\le \frac{(r+s)!}{r!}\, \lambda(P) (1-sx)^r x^s
\le \frac{r^r (s+r)!}{(r+s)^{r+s} r!}\, \lambda(P).
\end{align*}
To prove the other direction of this inequality, observe that if
we take $(x_1, \ldots, x_{m}) = \frac{r}{r+s}\,(y_1, \ldots, y_m)$, where $(y_1, \ldots, y_m) \in \Delta_{m-1}$ is $P$-optimal, and take $x_{m+1} = \cdots = x_{m+s} = \frac{1}{r+s}$,
then all inequalities above hold with equalities.
\end{proof}

\section{Proof of Theorem~\ref{THM:min-poly-degree}}\label{SEC:Proof}
In this section we prove Theorem~\ref{THM:min-poly-degree}.
By Theorem~\ref{THM:min-pattern}, it suffices to find a sequence of $r$-uniform minimal patterns $(P_k)_{k=1}^{\infty}$
such that the degree of the real number $\lambda(P_k)$ goes to infinity as $k$ goes to infinity.
Furthermore, by Observation~\ref{OBS:plus-s-minimal} and Proposition~\ref{PROP:Lagrangina-plus-s}, it suffices to find such a sequence for $r=3$.
So we will assume that $r=3$ in the rest of this note.
%
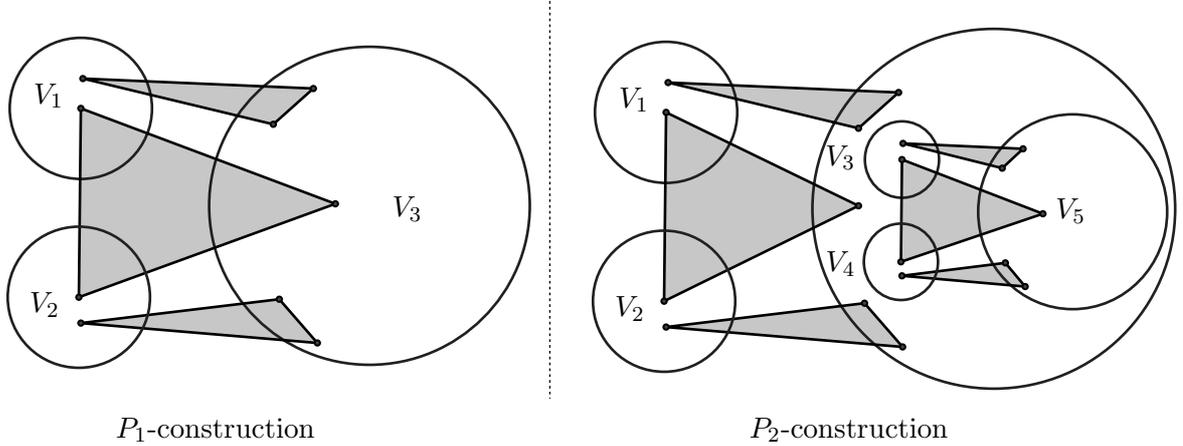
\begin{figure}[htbp]
\centering
\tikzset{every picture/.style={line width=0.75pt}} 
\begin{tikzpicture}[x=0.75pt,y=0.75pt,yscale=-1,xscale=1]
\draw[line width=1pt,color=sqsqsq]   (25.5,69) .. controls (25.5,49.39) and (41.39,33.5) .. (61,33.5) .. controls (80.61,33.5) and (96.5,49.39) .. (96.5,69) .. controls (96.5,88.61) and (80.61,104.5) .. (61,104.5) .. controls (41.39,104.5) and (25.5,88.61) .. (25.5,69) -- cycle ;
\draw[line width=1pt,color=sqsqsq]   (125,118) .. controls (125,73.82) and (160.82,38) .. (205,38) .. controls (249.18,38) and (285,73.82) .. (285,118) .. controls (285,162.18) and (249.18,198) .. (205,198) .. controls (160.82,198) and (125,162.18) .. (125,118) -- cycle ;
\draw[line width=1pt, fill=sqsqsq,fill opacity=0.25]  (177,59) -- (62,54) -- (157,77) -- (177,59) ;
\draw[line width=1pt, fill=sqsqsq,fill opacity=0.25]   (61,177) -- (160,165) -- (179,187) --(61,177) ;
\draw[line width=1pt, fill=sqsqsq,fill opacity=0.25]   (188,117) -- (61,69) -- (60,164) -- (188,117) ;
\draw[line width=1pt,color=sqsqsq]   (24.5,164) .. controls (24.5,144.39) and (40.39,128.5) .. (60,128.5) .. controls (79.61,128.5) and (95.5,144.39) .. (95.5,164) .. controls (95.5,183.61) and (79.61,199.5) .. (60,199.5) .. controls (40.39,199.5) and (24.5,183.61) .. (24.5,164) -- cycle ;
\draw[line width=1pt,color=sqsqsq]   (317.5,71) .. controls (317.5,51.39) and (333.39,35.5) .. (353,35.5) .. controls (372.61,35.5) and (388.5,51.39) .. (388.5,71) .. controls (388.5,90.61) and (372.61,106.5) .. (353,106.5) .. controls (333.39,106.5) and (317.5,90.61) .. (317.5,71) -- cycle ;
\draw[line width=1pt,color=sqsqsq]   (426,119.5) .. controls (426,69.52) and (466.52,29) .. (516.5,29) .. controls (566.48,29) and (607,69.52) .. (607,119.5) .. controls (607,169.48) and (566.48,210) .. (516.5,210) .. controls (466.52,210) and (426,169.48) .. (426,119.5) -- cycle ;
\draw[line width=1pt,fill=sqsqsq,fill opacity=0.25]   (469,61) -- (354,56) -- (449,79) -- (469,61) ;
\draw[line width=1pt,fill=sqsqsq,fill opacity=0.25]   (353,179) -- (452,167) -- (471,189) -- (353,179);
\draw[line width=1pt,fill=sqsqsq,fill opacity=0.25]   (449,118) -- (353,71) -- (352,166) -- (449,118) ;
\draw[line width=1pt,color=sqsqsq]   (316.5,166) .. controls (316.5,146.39) and (332.39,130.5) .. (352,130.5) .. controls (371.61,130.5) and (387.5,146.39) .. (387.5,166) .. controls (387.5,185.61) and (371.61,201.5) .. (352,201.5) .. controls (332.39,201.5) and (316.5,185.61) .. (316.5,166) -- cycle ;
\draw[line width=1pt,color=sqsqsq]   (452.19,94.74) .. controls (452.19,84.12) and (460.47,75.52) .. (470.69,75.52) .. controls (480.9,75.52) and (489.18,84.12) .. (489.18,94.74) .. controls (489.18,105.36) and (480.9,113.96) .. (470.69,113.96) .. controls (460.47,113.96) and (452.19,105.36) .. (452.19,94.74) -- cycle ;
\draw[line width=1pt,color=sqsqsq]   (508.71,121) .. controls (508.71,93.94) and (529.82,72) .. (555.86,72) .. controls (581.89,72) and (603,93.94) .. (603,121) .. controls (603,148.06) and (581.89,170) .. (555.86,170) .. controls (529.82,170) and (508.71,148.06) .. (508.71,121) -- cycle ;
\draw[line width=1pt, fill=sqsqsq,fill opacity=0.25]   (471.21,86.62) -- (520.7,99.07) -- (531.11,89.33)--(471.21,86.62) ;
\draw[line width=1pt, fill=sqsqsq,fill opacity=0.25]    (470.69,153.22) -- (522.26,146.72) -- (532.16,158.63) -- (470.69,153.22);
\draw[line width=1pt, fill=sqsqsq,fill opacity=0.25]   (470.69,94.74) -- (470.17,146.18) -- (541,122)--(470.69,94.74) ;
\draw[line width=1pt,color=sqsqsq]   (451.67,146.18) .. controls (451.67,135.56) and (459.95,126.96) .. (470.17,126.96) .. controls (480.38,126.96) and (488.66,135.56) .. (488.66,146.18) .. controls (488.66,156.79) and (480.38,165.4) .. (470.17,165.4) .. controls (459.95,165.4) and (451.67,156.79) .. (451.67,146.18) -- cycle ;
\draw[line width=0.5pt,dash pattern=on 1pt off 1.2pt]    (295,15) -- (295,215) ;

\draw (77,223) node [anchor=north west][inner sep=0.75pt]   [align=left] {$P_1$-construction};
\draw (393,223) node [anchor=north west][inner sep=0.75pt]   [align=left] {$P_2$-construction};
\draw (36,55) node [anchor=north west][inner sep=0.75pt]   [align=left] {$V_1$};
\draw (34,160) node [anchor=north west][inner sep=0.75pt]   [align=left] {$V_2$};
\draw (215,112) node [anchor=north west][inner sep=0.75pt]   [align=left] {$V_3$};
\draw (328,57) node [anchor=north west][inner sep=0.75pt]   [align=left] {$V_1$};
\draw (326,162) node [anchor=north west][inner sep=0.75pt]   [align=left] {$V_2$};
\draw (431,86) node [anchor=north west][inner sep=0.75pt]   [align=left] {$V_3$};
\draw (431.17,139.4) node [anchor=north west][inner sep=0.75pt]   [align=left] {$V_4$};
\draw (546,113) node [anchor=north west][inner sep=0.75pt]   [align=left] {$V_5$};
\draw [fill=uuuuuu] (177,59) circle (1pt);
\draw [fill=uuuuuu] (62,54) circle (1pt);
\draw [fill=uuuuuu] (157,77) circle (1pt);

\draw [fill=uuuuuu] (179,187) circle (1pt);
\draw [fill=uuuuuu] (61,177) circle (1pt);
\draw [fill=uuuuuu] (160,165) circle (1pt);

\draw [fill=uuuuuu] (188,117) circle (1pt);
\draw [fill=uuuuuu] (61,69) circle (1pt);
\draw [fill=uuuuuu] (60,164) circle (1pt);
\draw [fill=uuuuuu] (469,61) circle (1pt);
\draw [fill=uuuuuu] (354,56) circle (1pt);
\draw [fill=uuuuuu] (449,79) circle (1pt);
\draw [fill=uuuuuu] (471,189) circle (1pt);
\draw [fill=uuuuuu] (353,179) circle (1pt);
\draw [fill=uuuuuu] (452,167) circle (1pt);
\draw [fill=uuuuuu] (449,118) circle (1pt);
\draw [fill=uuuuuu] (353,71) circle (1pt);
\draw [fill=uuuuuu] (352,166) circle (1pt);
\draw [fill=uuuuuu] (531.11,89.33) circle (1pt);
\draw [fill=uuuuuu] (471.21,86.62) circle (1pt);
\draw [fill=uuuuuu] (520.7,99.07) circle (1pt);
\draw [fill=uuuuuu] (532.16,158.63) circle (1pt);
\draw [fill=uuuuuu] (470.69,153.22) circle (1pt);
\draw [fill=uuuuuu] (522.26,146.72) circle (1pt);
\draw [fill=uuuuuu] (541,122) circle (1pt);
\draw [fill=uuuuuu] (470.69,94.74) circle (1pt);
\draw [fill=uuuuuu] (470.17,146.18) circle (1pt);
\end{tikzpicture}
\caption{Constructions with one level and two levels.}
\label{fig:P1-P2}
\end{figure}

To start with, we let $P_1 := \left(3, \left\{ \multiset{1,2,3},\multiset{1,3,3}, \multiset{2,3,3} \right\} \right)$.
Recall that a $3$-graph $H$ is a $P_{1}$-construction (see Figure~\ref{fig:P1-P2}) if there exists a partition $V(H) = V_1 \cup V_2 \cup V_3$
such that the edge set of $H$ consists of
\begin{enumerate}[label=(\alph*)]
\item all triples that have one vertex in each $V_i$,
\item all triples that have one vertex in $V_1$ and two vertices in $V_3$, and
\item all triples that have one vertex in $V_2$ and two vertices in $V_3$.
\end{enumerate}
The pattern $P_1$ was studied by Yan and Peng in~\cite{YP22}, where they proved that  there exists a single $3$-graph
whose Tur\'an density is given by $P_1$-constructions which, by $\lambda(P_1) = 1/\sqrt{3}$, is an irrational number. It seems that some other patterns could be used
 to prove Theorem~\ref{THM:min-poly-degree}; however, the obtained sequence of Tur\'an densities (i.e.\ the sequence in~\eqref{eq:Seq})
produced by using $P_1$ is nicer than those produced by the other patterns that we tried.

Next, we define the pattern $P_{k+1}=(2k+3,\mathcal{E}_{k+1})$ for every $k\ge 1$ inductively.
It is easier to define what a $P_{k+1}$-construction is rather than to write down the definition of $P_{k+1}$:
for every integer $k\ge 1$ a $3$-graph $H$ is a \emph{$P_{k+1}$-construction} if there exists a partition $V(H) = V_1 \cup V_2 \cup V_3$
such that
\begin{enumerate}[label=(\alph*)]
\item the induced subgraph $H[V_3]$ is a $P_k$-construction, and
\item $H\setminus H[V_3]$ consists of all triples whose profile is in
$\left\{ \multiset{1,2,3},\multiset{1,3,3}, \multiset{2,3,3} \right\}$. 
\end{enumerate}

The pattern $P_{k}$ can be written down explicitly, although this is not necessary for our proof later.
For example, $P_2 = (5, \mathcal{E}_2)$ (see Figure~\ref{fig:P1-P2}), where
\begin{align*}
\mathcal{E}_2 =&  \left\{
\multiset{1,2,3},
\multiset{1,2,4},
\multiset{1,2,5},
\multiset{1,3,3},
\multiset{1,3,4},
\multiset{1,3,5}, \right. \\ & \quad
\multiset{1,4,4},
\multiset{1,4,5},
\multiset{1,5,5},
\multiset{2,3,3},
\multiset{2,3,4},
\multiset{2,3,5},\\ & \quad   \left.
\multiset{2,4,4},
\multiset{2,4,5},
\multiset{2,5,5},
\multiset{3,4,5},
\multiset{3,5,5}, \multiset{4,5,5}
\right\}.
\end{align*}

Our first result determines the Lagrangian of $P_k$ for every $k\ge 1$.
For convenience, we set $P_0 := (1, \{\emptyset\})$ and $\lambda_0 := 0$.

\begin{proposition}\label{PROP:Lagrangian-Pk}
For every integer $k\ge 0$, we have $\lambda(P_{k+1}) = {1}/{\sqrt{3-2\lambda(P_k)}}$ and  the pattern $P_{k+1}$ is minimal.
In particular, $\left(\lambda(P_{k})\right)_{k=1}^{\infty}$ is the sequence in~\eqref{eq:Seq}.
\end{proposition}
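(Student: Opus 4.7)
The plan is to compute $\lambda(P_{k+1})$ by decomposing vectors of $\Delta_{2k+2}$ according to the nested structure of $P_{k+1}$, which reduces matters to a simple one-variable optimisation. Any point of $\Delta_{2k+2}$ can be written as $(x_1,x_2,zy_1,\ldots,zy_{2k+1})$ with $(x_1,x_2,z)\in\Delta_2$ and $(y_1,\ldots,y_{2k+1})\in\Delta_{2k}$, and the partition of the edges of $P_{k+1}$ into ``crossing'' triples (those with at least one vertex in $V_1\cup V_2$) and triples entirely inside $V_3$, together with the $3$-homogeneity of the Lagrange polynomial, gives
\begin{equation*}
\lambda_{P_{k+1}}\bigl(x_1,x_2,zy_1,\ldots,zy_{2k+1}\bigr)
\;=\; \lambda_{P_1}(x_1,x_2,z) \;+\; z^3\,\lambda_{P_k}(y_1,\ldots,y_{2k+1}).
\end{equation*}
Optimising over $(y_1,\ldots,y_{2k+1})$ first reduces the task to maximising $\lambda_{P_1}(x_1,x_2,z)+\lambda_k z^3$ on $\Delta_2$, where $\lambda_k:=\lambda(P_k)$.

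The symmetry $x_1\leftrightarrow x_2$ and the strict AM-GM bound on $x_1x_2$ force $x_1=x_2=(1-z)/2$ at any optimum, reducing the problem to maximising
\begin{equation*}
g(z) \;=\; \tfrac{3}{2}(1-z)^2z \;+\; 3(1-z)z^2 \;+\; \lambda_k z^3
\end{equation*}
over $z\in[0,1]$. A direct differentiation yields the clean first-order condition $(3-2\lambda_k)\,z^2=1$, and hence the unique interior critical point $z^*=1/\sqrt{3-2\lambda_k}$, which lies in $(0,1)$ because $\lambda_k<1$; a quick comparison of $g(z^*)$ with the boundary values $g(0)=0$ and $g(1)=\lambda_k$ confirms that $z^*$ is where the maximum is attained. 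The crucial algebraic step is to substitute the first-order identity $\lambda_k z^2 = (3z^2-1)/2$ back into $g$: pulling out a factor of $z/2$ and using $(1-z)^2+2(1-z)z+z^2=1$ makes the expression collapse to
\begin{equation*}
g(z^*) \;=\; \tfrac{z^*}{2}\bigl(3\bigl((1-z^*)+z^*\bigr)^2 - 1\bigr) \;=\; z^*,
\end{equation*}
which yields the announced recursion $\lambda(P_{k+1})=1/\sqrt{3-2\lambda(P_k)}$.

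Minimality of $P_{k+1}$ is then handled by induction on $k$, with $P_0$ as a vacuous base case. Assuming inductively that every $P_k$-optimal vector has strictly positive entries, the analysis above shows that any $P_{k+1}$-optimum has the form $\bigl((1-z^*)/2,\,(1-z^*)/2,\,z^*\mathbf{y}^*\bigr)$ with $z^*\in(0,1)$ and $\mathbf{y}^*$ a $P_k$-optimal vector in $\Delta_{2k}$; by induction $\mathbf{y}^*$ has no zero entry, so neither does the full $P_{k+1}$-optimum, and $P_{k+1}$ is minimal. The only real subtlety is spotting the cancellation that gives $g(z^*)=z^*$; once this is in hand, both the recursion and its unfolding (from $\lambda_0=0$) into the sequence~\eqref{eq:Seq} follow immediately.
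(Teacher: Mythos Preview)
Your proof is correct and follows essentially the same approach as the paper: both decompose an optimal vector via the recursive structure of $P_{k+1}$, use homogeneity and AM--GM (on $x_1,x_2$) to reduce to the one-variable maximisation of $\tfrac{1}{2}\bigl(3z-(3-2\lambda_k)z^3\bigr)$, and then deduce minimality by tracing back the equality cases and invoking induction on $P_k$. The only point you leave implicit is why $\lambda_k<1$ (needed for $z^*\in(0,1)$ and $x_1=x_2>0$), but this follows immediately by induction from the recursion, as the paper also notes.
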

\begin{proof}
We use induction on $k$ where the base $k=0$ is easy to check directly (or can be derived by adapting the forthcoming induction step to work for $k=0$). Let $k\ge 1$.

Let us prove that $\lambda(P_{k+1}) = {1}/{\sqrt{3-2\lambda(P_k)}}$.
Recall that $P_k = (2k+1, \mathcal{E}_k)$ and $P_{k+1} = (2k+3, \mathcal{E}_{k+1})$.
Let $(x_1, \ldots, x_{2k+3}) \in \Delta_{2k+2}$ be a $P_{k+1}$-optimal vector.
Let $x := \sum_{i=3}^{2k+3} x_i= 1-x_1-x_2$.
It follows from the definitions of $P_{k+1}$ and the Lagrange polynomial that
\begin{align}\label{equ:lambda-P-k+1}
\lambda(P_{k+1})
= \lambda_{\mathcal{E}_{k+1}}(x_1, \ldots, x_{2k+3})
= 6\left(x_1x_2x+(x_1+x_2)\frac{x^2}{2}\right) + \lambda_{\mathcal{E}_k}(x_3, \ldots, x_{2k+3}).
\end{align}
Since $\lambda_{\mathcal{E}_k}(x_3, \ldots, x_{2k+3})$ is a homogeneous polynomial of degree $3$, we have
\begin{align*}
\lambda_{\mathcal{E}_k}(x_3, \ldots, x_{2k+3})
= \lambda_{\mathcal{E}_k}\left(\frac{x_3}{x}, \ldots, \frac{x_{2k+3}}{x}\right)x^3
\le \lambda(P_k) x^3.
\end{align*}
So it follows from (\ref{equ:lambda-P-k+1}) and the 2-variable AM-GM inequality that
\begin{align*}
\lambda(P_{k+1})
& \le 6\left( \left(\frac{x_1+x_2}{2}\right)^2x+ (x_1+x_2)\frac{x^2}{2} \right)+ \lambda(P_k) x^3 \\
& = 6\left( \left(\frac{1-x}{2}\right)^2x+ (1-x)\frac{x^2}{2} \right)+ \lambda(P_k) x^3
 = \frac{3 x - \left(3-2\lambda(P_k)\right)x^3}{2}.
\end{align*}
Since $0\le \lambda(P_k)\le 1$,  one can easily show by taking the derivative that the maximum of the function $\left(3 x - \left(3-2\lambda(P_k)\right)x^3\right)/2$
on $[0,1]$ is achieved if and only if $x = {1}/{\sqrt{3-2\lambda(P_k)}}$,
and the maximum value is ${1}/{\sqrt{3-2\lambda(P_k)}}$.
This proves that $\lambda(P_{k+1}) \le {1}/{\sqrt{3-2\lambda(P_k)}}$.

To prove the other direction of this inequality,
one just need to observe that when we choose
\begin{align}\label{equ:maximizer-Pk}
x_1 = x_2 = \frac{1}{2} - \frac{1}{2\sqrt{3-2\lambda(P_k)}}\quad\text{and}\quad
(x_3, \ldots, x_{2k+3}) = \frac{1}{\sqrt{3-2\lambda(P_k)}}\,(y_1, \ldots, y_{2k+1})
\end{align}
where $(y_1, \ldots, y_{2k+1}) \in \Delta_{2k}$ is a $P_k$-optimal vector,
then all inequalities above hold with equality.
Therefore, $\lambda(P_{k+1}) = {1}/{\sqrt{3-2\lambda(P_k)}}$.

To prove that $P_{k+1}$ is minimal, take any $P_{k+1}$-optimal vector $(x_1, \ldots, x_{2k+3}) \in \Delta_{2k+2}$; we have to show that it has no zero entries. This vector attains equality in all our inequalities above, which routinely implies that $(x_1, \ldots, x_{2k+3})$ must satisfy (\ref{equ:maximizer-Pk}), for some $P_k$-optimal vector $(y_1,\ldots,y_{2k+1})$.
We see that $x_1 = x_2$ are both non-zero because the sequence $(\lambda(P_0),\ldots,\lambda(P_{k+1}))$ is strictly increasing (since $x<1/\sqrt{3-2x}$ for all $x\in [0,1)$)
and thus $\lambda(P_k)<1$.
The remaining conclusion that  $x_3, \ldots, x_{2k+3}$ are non-zero follows from the induction hypothesis on $(y_1, \ldots, y_{2k+1})$.
\end{proof}

In order to finish the proof of Theorem~\ref{THM:min-poly-degree} it suffices to prove that the degree of $\mu_k:=\lambda(P_k)$
goes to infinity as $k\to\infty$. This is achieved by the last claim of the following lemma.

\begin{lemma}\label{lm:new} Let $p_1(x):=3x^2-1$ and inductively for $k=1,2,\dots$ define 
$$
 p_{k+1}(x)=(2x^2)^{2^k} p_k\left(\frac{3x^2-1}{2x^2}\right),\quad \mbox{for $x\in\mathbb R$}.
$$
 Then the following claims hold for each $k\in\mathbb N \colon$ 
\begin{enumerate}[label=(\alph*)]
\item\label{it:n1} $p_k(\mu_k)=0$; 
\item\label{it:n2} $p_k$ is a polynomial of degree at most $2^k$ with integer coefficients: $p_k(x)=\sum_{i=0}^{2^k} c_{k,i} x^i$ for some $c_{k,i}\in\mathbb Z$;
\item\label{it:n3} 
 the integers $b_{k,i}:=c_{k,i}$ for even $k$ and $b_{k,i}:=c_{k,2^k-i}$ for odd $k$ satisfy the following:
 \begin{enumerate}[label=(c.\roman*)]
 \item\label{it:s1} for each integer $i$ with $0\le i\le 2^k$, $3$ divides $b_{k,i}$ if and only if $i\not=2^k$;
 \item\label{it:s2} $9$ does not divide $b_{k,0}$;
\end{enumerate}
\item\label{it:n4} the polynomial $p_k$ is irreducible of degree exactly $2^k$;
\item\label{it:n5} the degree of $\mu_k$ is $2^k$.
 \end{enumerate}
\end{lemma}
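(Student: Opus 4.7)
\noindent\textbf{Proof plan for Lemma~\ref{lm:new}.}

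The five claims should be proved in order, essentially all by induction on $k$, with the bulk of the real work being the divisibility analysis in claim~\ref{it:n3}.

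\emph{For claim~\ref{it:n1},} I would proceed by a one-line induction. The base case is $p_1(\mu_1)=3\cdot 1/3-1=0$. From $\mu_{k+1}=1/\sqrt{3-2\mu_k}$ (Proposition~\ref{PROP:Lagrangian-Pk}) one gets algebraically $\mu_k=(3\mu_{k+1}^2-1)/(2\mu_{k+1}^2)$, so by the defining recursion of $p_{k+1}$,
\[
p_{k+1}(\mu_{k+1})=(2\mu_{k+1}^2)^{2^k}\,p_k(\mu_k)=0.
\]
\emph{For claim~\ref{it:n2},} I would expand
\[
p_{k+1}(x)=\sum_{i=0}^{2^k} c_{k,i}\,(3x^2-1)^i\,(2x^2)^{2^k-i},
\]
note that each summand is an integer polynomial whose degree is $2i+2(2^k-i)=2^{k+1}$, and conclude by induction. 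Observe as a free byproduct that $p_k(x)$ is a polynomial in $x^2$ (the base case has this property and the recursion preserves it), so $c_{k,i}=0$ whenever $i$ is odd; this will be convenient in the next step.

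\emph{For claim~\ref{it:n3},} which is the main obstacle, I would reduce the displayed recursion modulo $3$ and modulo $9$ separately, handling the parities of $k$ in parallel. Modulo $3$ one has $(3x^2-1)^i\equiv(-1)^i$ and $(2x^2)^{2^k-i}\equiv(-x^2)^{2^k-i}$, and since $2^k$ is even for $k\ge 1$ this collapses to
\[
p_{k+1}(x)\equiv\sum_{i=0}^{2^k} c_{k,i}\,x^{\,2^{k+1}-2i}\pmod 3,
\]
giving the clean rule $c_{k+1,\,2^{k+1}-2i}\equiv c_{k,i}\pmod 3$ and $c_{k+1,j}\equiv 0\pmod 3$ for odd $j$. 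Translating into $b$-coordinates according to the parity of $k+1$, this reduces the statement \ref{it:s1} for $p_{k+1}$ to \ref{it:s1} for $p_k$, since the ``reversal'' swapping $c_{k,\cdot}$ with $b_{k,\cdot}$ that happens at each odd $k$ exactly matches the reversal induced by the substitution $i\mapsto 2^{k+1}-2i$. For \ref{it:s2}, I would compute the leading (respectively constant) coefficient of $p_{k+1}$ from the expansion: the leading coefficient is
\[
c_{k+1,\,2^{k+1}}=\sum_{i=0}^{2^k}c_{k,i}\,3^i\,2^{2^k-i}\equiv c_{k,0}\,2^{2^k}\pmod 9,
\]
where the simplification uses both $3^i\equiv 0\pmod 9$ for $i\ge 2$ and $c_{k,1}=0$. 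The constant coefficient comes only from the $i=2^k$ summand and equals $c_{k,2^k}\cdot(-1)^{2^k}=c_{k,2^k}$. Depending on the parity of $k$, one of these is $b_{k+1,0}$ and the inductive hypothesis says the relevant $b_{k,\cdot}$ is exactly divisible by $3$; since $2^{2^k}$ is coprime to $3$, the product is still exactly divisible by $3$. The bookkeeping between $(c_{k,i})$ and $(b_{k,i})$ across parities of $k$ is the only fiddly part and will need a careful case split but no new ideas.

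\emph{Claims~\ref{it:n4} and~\ref{it:n5}} then follow essentially for free. Part~\ref{it:s1} with $i=2^k$ says the leading coefficient of $p_k$ is not divisible by $3$, so in particular nonzero and $\deg p_k=2^k$. If $k$ is even, the sequence $(b_{k,0},\dots,b_{k,2^k})=(c_{k,0},\dots,c_{k,2^k})$ satisfies the hypotheses of Eisenstein's criterion at the prime~$3$ directly, so $p_k$ is irreducible over $\mathbb Z$ (and hence over $\mathbb Q$). If $k$ is odd, the same Eisenstein hypotheses at $3$ hold for the reciprocal polynomial $x^{2^k}p_k(1/x)=\sum_j b_{k,j}x^j$; since $p_k(0)=c_{k,0}=b_{k,2^k}\ne 0$ (indeed, $3\nmid b_{k,2^k}$), irreducibility of the reciprocal polynomial is equivalent to irreducibility of $p_k$ itself. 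In either case $p_k$ is, up to sign, the minimal polynomial of the root $\mu_k$ supplied by~\ref{it:n1}, so $\mu_k$ has algebraic degree exactly~$2^k$, proving~\ref{it:n5} and completing Theorem~\ref{THM:min-poly-degree}.
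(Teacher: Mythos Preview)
Your plan is correct and follows the paper's proof essentially line for line: the same inductive expansion $p_{k+1}(x)=\sum_i c_{k,i}(3x^2-1)^i(2x^2)^{2^k-i}$, the same mod-$3$ reduction giving $c_{k+1,\,2^{k+1}-2i}\equiv c_{k,i}$, the same extraction of the leading and constant coefficients for (c.ii), and the same Eisenstein finish (the paper states a two-sided Eisenstein criterion rather than passing to the reciprocal polynomial for odd $k$, but these are equivalent). One small slip to fix: your sentence ``Part~(c.i) with $i=2^k$ says the leading coefficient of $p_k$ is not divisible by~$3$'' is only valid for even~$k$; when $k$ is odd, $b_{k,2^k}=c_{k,0}$ is the \emph{constant} term, while the leading coefficient $c_{k,2^k}=b_{k,0}$ is divisible by~$3$ and its non-vanishing must come from (c.ii) instead --- exactly as the paper argues and as you yourself implicitly use a few lines later.
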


\begin{proof} Let us use induction on $k$. All stated claims are clearly satisfied for $k=1$, when $p_1(x)=3x^2-1$ and $\mu_1=1/\sqrt3$. Let us prove them for $k+1$ assuming that they hold for some $k\ge 1$.

For Part~\ref{it:n1}, we have by Proposition~\ref{PROP:Lagrangian-Pk} that
$$
 \frac{3\mu_{k+1}^2-1}{2\mu_{k+1}^2}= \frac{3/(3-2\mu_k)-1}{2/(3-2\mu_k)}=\mu_k
$$ 
 and thus $p_{k+1}(\mu_{k+1})=(2\mu_{k+1}^2)^{2^k} p_k(\mu_k)$, which is 0 by induction.

Part~\ref{it:n2}
also follows easily from the induction assumption:
 \begin{equation}\label{eq:p}
p_{k+1}(x)=(2x^2)^{2^k}\sum_{i=0}^{2^k} c_{k,i} \left(\frac{3x^2-1}{2x^2}\right)^i
 = \sum_{i=0}^{2^k} c_{k,i}(3x^2-1)^i (2x^2)^{2^k-i}.
 \end{equation}

Let us turn to Part~\ref{it:n3}. The relation in~\eqref{eq:p} when taken modulo 3 reads that
 $$
 \sum_{j=0}^{2^{k+1}} c_{k+1,j} x^j\equiv \sum_{i=0}^{2^k} c_{k,i} x^{2^{k+1}-2i}\pmod3.
$$
 Thus, $c_{k+1,j}\equiv c_{k,2^k-j/2}\pmod 3$ for all even $j$ between $0$ and $2^{k+1}$, while $c_{k+1,j}\equiv 0 \pmod 3$ for odd $j$ (in fact, $c_{k+1,j}=0$ for all odd $j$ since $p_{k+1}$ is an even function). In terms of the sequences $(b_{\ell,j})_{j=0}^{2^\ell}$, this relation states that
 $$
 b_{k+1,j}\equiv b_{k,j/2}\pmod 3 \quad \mbox{for all even $j$ with $0\le j\le 2^k$,}
$$
 while $b_{k+1,j}\equiv 0\pmod 3$  for all odd~$j$. This implies Part~\ref{it:s1}. 
For Part~\ref{it:s2}, the relation in~\eqref{eq:p} when taken modulo 9 gives that $c_{k+1,0}\equiv c_{k,2^k}$ and $c_{k+1,2^{k+1}}\equiv c_{k,0}\cdot 2^{2^k} + c_{k,1}\cdot3\cdot 2^{2^k-1}$. 
Since $c_{k,1}$ is divisible by $3$, we have in fact that $c_{k+1,2^{k+1}}\equiv c_{k,0}\cdot 2^{2^k} \equiv c_{k,0} \pmod 9$. By the induction hypothesis, this implies that $9$ does not divide $b_{k+1,0}$.

By the argument above, $c_{k+1,2^{k+1}}$ is non-zero module $3$ for odd $k$ and non-zero module 9 for even $k$. Thus, regardless of the parity of $k$, the degree of the polynomial $p_{k+1}$ is exactly $2^{k+1}$. Moreover, $p_{k+1}$ satisfies Eisenstein's criterion for prime $q=3$ (namely, that $q$ divides all coefficients, except exactly one at the highest power of $x$ or at the constant term while the other of the two is not divisible by $q^2$). By the criterion (whose proof can be found in e.g.\ \cite[Section~4]{PI14}), the polynomial $p_{k+1}$ is irreducible, proving Part~\ref{it:n4}.

By putting the above claims together, we see that $\mu_{k+1}$ is a root of an irreducible polynomial of degree $2^{k+1}$, establishing Part~\ref{it:n5}. This completes the proof the lemma (and thus of Theorem~\ref{THM:min-poly-degree})\end{proof}

\section{Concluding remarks}\label{SEC:remarks}
Our proof of Theorem~\ref{THM:min-poly-degree} shows that for every integer $d$ which is a power of $2$ there exists a finite family $\mathcal{F}$ of $r$-graphs such that $\pi(\mathcal{F})$ has algebraic degree $d$.
It seems interesting to know whether this is true for all positive integers.

\begin{problem}\label{PROB:degree-for-all-integers}
Let $r\ge 3$ be an integer.
Is it true that for every positive integer $d$ there exists a finite family $\mathcal{F}$ of $r$-graphs such that $\pi(\mathcal{F})$ has algebraic degree exactly~$d$?
\end{problem}

By considering other patterns, one can get Tur\'{a}n densities in $\Pi_{\mathrm{fin}}^{(r)}$ whose algebraic degrees are not powers of $2$. For example, the pattern $([3], \multiset{1,2,3}, \{1,2\})$ with \emph{recursive parts} 1 and 2 (where we can take blowups of the single edge $\multiset{1,2,3}$ and recursively repeat this step inside the first and the second parts of each added blowup)
gives a Tur\'{a}n density in $\Pi_{\mathrm{fin}}^{(3)}$ (by~\cite[Theorem~3]{PI14}, a generalisation of Theorem~\ref{THM:min-pattern}) whose degree can be computed to be~$3$. However, we did not see any promising way of how to produce a  pattern whose Lagrangian has any given degree~$d$.
\section*{Acknowledgement}
We would like to thank the referees for their helpful comments. 
\small
\bibliographystyle{abbrv}

\begin{thebibliography}{10}

\bibitem{BaberTalbot11}
R.~Baber and J.~Talbot.
\newblock Hypergraphs do jump.
\newblock {\em Combin. Probab. Comput.}, 20(2):161--171, 2011.

\bibitem{BT12}
R.~Baber and J.~Talbot.
\newblock New {T}ur\'{a}n densities for 3-graphs.
\newblock {\em Electron. J. Combin.}, 19(2):Paper 22, 21, 2012.

\bibitem{CG98}
F.~Chung and R.~Graham.
\newblock {\em {Erd\H{o}s} on graphs: his legacy of unsolved problems}.
\newblock A K Peters, Ltd., Wellesley, MA, 1998.

\bibitem{DF00}
D.~De~Caen and Z.~F\"{u}redi.
\newblock The maximum size of 3-uniform hypergraphs not containing a {F}ano
  plane.
\newblock {\em J. Combin. Theory Ser. B}, 78(2):274--276, 2000.

\bibitem{E64}
P.~Erd\H{o}s.
\newblock On extremal problems of graphs and generalized graphs.
\newblock {\em Israel J. Math.}, 2:183--190, 1964.

\bibitem{ErdosSimonovits66}
P.~Erd\H{o}s and M.~Simonovits.
\newblock A limit theorem in graph theory.
\newblock {\em Studia Sci. Math. Hungar.}, 1:51--57, 1966.

\bibitem{ErdosStone46}
P.~Erd\"{o}s and A.~H. Stone.
\newblock On the structure of linear graphs.
\newblock {\em Bull. Amer. Math. Soc.}, 52:1087--1091, 1946.

\bibitem{FranklPengRodlTalbot07}
P.~Frankl, Y.~Peng, V.~R\"{o}dl, and J.~Talbot.
\newblock A note on the jumping constant conjecture of {E}rd{\H o}s.
\newblock {\em J. Combin. Theory Ser. B}, 97(2):204--216, 2007.

\bibitem{FranklRodl84}
P.~Frankl and V.~R\"{o}dl.
\newblock Hypergraphs do not jump.
\newblock {\em Combinatorica}, 4(2-3):149--159, 1984.

\bibitem{FS05}
Z.~F\"{u}redi and M.~Simonovits.
\newblock Triple systems not containing a {F}ano configuration.
\newblock {\em Combin. Probab. Comput.}, 14(4):467--484, 2005.

\bibitem{Grosu16}
C.~Grosu.
\newblock On the algebraic and topological structure of the set of {T}ur\'{a}n
  densities.
\newblock {\em J. Combin. Theory Ser. B}, 118:137--185, 2016.

\bibitem{KatonaNemetzSimonovits64}
G.~Katona, T.~Nemetz, and M.~Simonovits.
\newblock On a problem of {T}ur\'{a}n in the theory of graphs.
\newblock {\em Mat. Lapok}, 15:228--238, 1964.

\bibitem{Keevash11}
P.~Keevash.
\newblock Hypergraph {T}ur\'{a}n problems.
\newblock In {\em Surveys in combinatorics 2011}, volume 392 of {\em London
  Math. Soc. Lecture Note Ser.}, pages 83--139. Cambridge Univ. Press,
  Cambridge, 2011.

\bibitem{KS05b}
P.~Keevash and B.~Sudakov.
\newblock The {T}ur\'{a}n number of the {F}ano plane.
\newblock {\em Combinatorica}, 25(5):561--574, 2005.

\bibitem{MS65}
T.~S. Motzkin and E.~G. Straus.
\newblock Maxima for graphs and a new proof of a theorem of {T}ur\'{a}n.
\newblock {\em Canadian J. Math.}, 17:533--540, 1965.

\bibitem{PI14}
O.~Pikhurko.
\newblock On possible {T}ur\'{a}n densities.
\newblock {\em Israel J. Math.}, 201(1):415--454, 2014.

\bibitem{Pikhurko15}
O.~Pikhurko.
\newblock The maximal length of a gap between {$r$}-graph {T}ur\'{a}n
  densities.
\newblock {\em Electron. J. Combin.}, 22(4):Paper 4.15, 7, 2015.

\bibitem{Sidorenko95}
A.~Sidorenko.
\newblock What we know and what we do not know about {T}ur\'{a}n numbers.
\newblock {\em Graphs Combin.}, 11(2):179--199, 1995.

\bibitem{TU41}
P.~Tur{\'a}n.
\newblock On an extremal problem in graph theory.
\newblock {\em Mat. Fiz. Lapok}, 48:436--452, 1941.

\bibitem{YP22}
Z.~Yan and Y.~Peng.
\newblock An irrational {L}agrangian density of a single hypergraph.
\newblock {\em SIAM J. Discrete Math.}, 36(1):786--822, 2022.

\bibitem{YanPeng21}
Z.~Yan and Y.~Peng.
\newblock Non-jumping {T}ur\'{a}n densities of hypergraphs.
\newblock {\em Discrete Math.}, 346(1):Paper No. 113195, 11, 2023.

\end{thebibliography}

\end{document}